\theoremstyle{plain}
\newtheorem{theorem}{Theorem}[section]
\newtheorem{lemma}[theorem]{Lemma}
\newtheorem{corollary}[theorem]{Corollary}
\newtheorem{problem}[theorem]{Problem}
\newtheorem*{main}{Main Theorem}
\theoremstyle{remark}
\newtheorem*{acknowledgment}{Acknowledgment}
\numberwithin{equation}{section}
\newcommand{\byeqn}[1]{\overset{\eqref{#1}}=}  
\title{A Characterization of Adequate Semigroups By Forbidden Subsemigroups}
\author{Jo\~{a}o Ara\'{u}jo}
\author{Michael Kinyon}
\author{Ant\'{o}nio Malheiro}
\address[Ara\'{u}jo]
{Universidade Aberta \\
R. Escola Polit\'{e}cnica, 147 \\
1269-001 Lisboa, Portugal}
\address[Ara\'{u}jo, Malheiro]
{Centro de \'{A}lgebra \\
Universidade de Lisboa \\
1649-003 Lisboa, Portugal}
\email{\url{mjoao@ptmat.fc.ul.pt}}
\email{\url{malheiro@cii.fc.ul.pt}}
\address[Kinyon]{Department of Mathematics \\
University of Denver \\ 2360 S Gaylord St \\ Denver, Colorado 80208 USA}
\email{\url{mkinyon@du.edu}}
\address[Malheiro]
{Departamento de Matem\'{a}tica, Faculdade de Ci\^{e}ncias e Tecnologia\\
Universidade Nova de Lisboa \\
2829-516 Caparica, Portugal}
\begin{document}

\begin{abstract}
A semigroup is \emph{amiable} if there is exactly one idempotent in each $\mathcal{R}^*$-class and in each $\mathcal{L}^*$-class. A semigroup is \emph{adequate} if it is amiable and if its idempotents commute. We characterize adequate semigroups by showing that they are precisely those amiable semigroups which do not contain isomorphic copies of two particular nonadequate semigroups as subsemigroups.
\end{abstract}

\maketitle

\section{Introduction}

For a semigroup $S$, the usual Green's equivalence relations $\mathcal{L}$ and $\mathcal{R}$ are defined by $x\mathcal{L}y$ if and only if $S^1 x = S^1 y$ and $x\mathcal{R}y$ if and only if $xS^1 = yS^1$ for all $x,y\in S$ where $S^1 = S$ if $S$ is a monoid and otherwise $S^1 = S\cup \{1\}$, that is $S$ with an identity element $1$ adjoined. Naturally linked to these relations are the classes of semigroups defined as follows:
\begin{itemize}
\item A semigroup is \emph{regular} if there is an idempotent in each $\mathcal{L}$-class and in each $\mathcal{R}$-class;
\item A semigroup is \emph{inverse} if there is a unique idempotent in each $\mathcal{L}$-class and in each $\mathcal{R}$-class. A regular semigroup is inverse if and only if its idempotents commute (\cite[Theorem 5.1.1]{Ho}).
\end{itemize}

The starred Green's relation $\mathcal{L}^*$ is defined by $x\mathcal{L}^* y$ if and only if $x\mathcal{L}y$ in some semigroup containing $S$ as a subsemigroup, and a similar definition gives $\mathcal{R}^*$. These are characterized, respectively, by $x\mathcal{L}^* y$ if and only if, for all $a,b\in S^1$, $xa = xb \Leftrightarrow ya = yb$ and by $x\mathcal{R}^* y$ if and only if, for all $a,b\in S^1$, $ax = bx \Leftrightarrow ay = by$.
Naturally linked to these relations are the classes of semigroups defined as follows:

\begin{itemize}
\item A semigroup is \emph{abundant} if there is an idempotent in each $\mathcal{L}^*$-class and in each $\mathcal{R}^*$-class \cite{fountain2}.
\item A semigroup is \emph{adequate} if it is abundant and its idempotents commute \cite{fountain1}.
\item A semigroup is \emph{amiable} if there is a unique idempotent in each $\mathcal{L}^*$-class and in each $\mathcal{R}^*$-class \cite{AK}. Every adequate semigroup is amiable \cite{fountain1}.
\end{itemize}

Since $\mathcal{L}\subseteq \mathcal{L}^*$ and $\mathcal{R}\subseteq \mathcal{R}^*$, abundant semigroups generalize regular semigroups, and amiable (and hence, adequate) semigroups generalize inverse semigroups. Of course, the classes of regular and inverse semigroups are among the most intensively studied classes of semigroups. Many of the fundamental results in these classes have been generalized to abundant and adequate semigroups, for which there is also an extensive literature.

It has been known since Fountain's first paper \cite{fountain1} that the class of adequate semigroups is properly contained in the class of amiable semigroups, because he constructed an infinite amiable, but not adequate, semigroup. Later M. Kambites asked whether these two classes coincide on finite semigroups and a negative answer was provided in \cite{AK}. The aim of this paper is to characterize adequate semigroups inside the class of amiable semigroups. We therefore hope that our main result will provide a useful tool for generalizing results on inverse and adequate semigroups to the setting of amiable semigroups.

We say that a semigroup $S$ \emph{avoids} a semigroup $T$ if $S$ does not contain an isomorphic copy of $T$ as a subsemigroup. The main result of this paper is the following.

\begin{main}
\label{Thm:Main}
Let $S$ be an amiable semigroup. Then $S$ is adequate if and only if $S$ avoids both of the semigroups defined by the presentations
\begin{align*}
\mathcal{F} &= \langle a, b \mid a^2 = a, b^2 = b \rangle  \tag{F} \\
\intertext{and}
\mathcal{M} &= \langle a, b \mid a^2 = a, b^2 = b, aba = bab = ab \rangle\,. \tag{M}
\end{align*}
\end{main}

The semigroup $F$ defined by the presentation $\mathcal{F}$ is  Fountain's original example of an amiable semigroup which is not adequate (\cite{fountain1}, Example 1.4). Except for changes in notation, the example is given as follows. Let
\[
A = \begin{pmatrix} 1 & 0 \\ 1 & 0\end{pmatrix},\qquad
B = \begin{pmatrix} 1 & 1 \\ 0 & 0\end{pmatrix},\qquad
C = \begin{pmatrix} 1 & 1 \\ 1 & 1\end{pmatrix},\qquad
D = \begin{pmatrix} 2 & 0 \\ 0 & 0\end{pmatrix}\,.
\]
Set $F_0 = \{ 2^n A, 2^n B, 2^n C, 2^n D\mid n\geq 0\}$. Then $F_0$ is a semigroup under the usual matrix multiplication. It is easy to see that $A$ and $B$ are the only idempotents of $F_0$. The $\mathcal{L}^*$-classes are $\{ 2^n A, 2^n D\mid n\geq 0\}$, $\{ 2^n B, 2^n C\mid n\geq 0\}$, and the $\mathcal{R}^*$-classes are $\{ 2^n A, 2^n C\mid n\geq 0\}$, $\{ 2^n B, 2^n D\mid n\geq 0\}$. Hence $F_0$ is amiable, but it is not adequate since $AB = C\neq D = BA$.

Note that for $n \geq 0$, $2^n C = C^{n+1} = (AB)^{n+1}$, $2^n D = D^{n+1} = (BA)^{n+1}$, $2^n A = C^n A = (AB)^n A$ and $2^n B = D^n B = (BA)^n B$. Therefore $F_0 = \{ (AB)^n A, (BA)^n B, (AB)^m, (BA)^m \mid n\geq 0, m\geq 1\}$ with no two of the listed elements coinciding. On the other hand,   $F=\{ (ab)^n a, (ba)^n b, (ab)^m, (ba)^m \mid n\geq 0, m\geq 1\}$ with no two of the listed elements coinciding. Therefore Fountain's $F_0$ has $\mathcal{F}$ as a presentation.

The semigroup $M$ defined by the presentation $\mathcal{M}$ is the first known \emph{finite} example of an amiable semigroup which is not adequate \cite{AK}. Setting $c = ab$ and $d = ba$, it is easy to see from the relations that $M = \{a,b,c,d\}$ with the following multiplication table:
\begin{table}[htb]
\[
\begin{tabular}{c|cccc}
  $M$ & $a$ & $b$ & $c$ & $d$ \\ \hline
  $a$ & $a$ & $c$ & $c$ & $c$ \\
  $b$ & $d$ & $b$ & $c$ & $d$ \\
  $c$ & $c$ & $c$ & $c$ & $c$ \\
  $d$ & $d$ & $c$ & $c$ & $c$
\end{tabular}
\]
\caption{The smallest amiable semigroup which is not adequate.}
\end{table}

\noindent The $\mathcal{L}^*$-classes are $\{a,d\}$, $\{b\}$ and $\{c\}$. The $\mathcal{R}^*$-classes are $\{b,d\}$, $\{a\}$ and $\{c\}$. Thus $M$ is amiable but it is evidently not adequate since $ab = c\neq d = ba$.

In fact, the original motivation for this paper was a conjecture offered in \cite{AK}, that every finite amiable semigroup which is not adequate contains an isomorphic copy of $M$. The conjecture was based on a computer search in which it was found that the conjecture holds up to order $37$. The confirmation of this conjecture is a trivial corollary of our Main Theorem.

The preceding discussion has shown that the avoidance condition of the Main Theorem is certainly necessary, since both $F$ and $M$ are amiable but neither is adequate. The next section is devoted to the proof of the sufficiency. In the last section, we pose some problems.

\section{The Proof}
\label{Sec:Finale}

In what follows we make frequent use of the fact that for idempotent elements (more generally, for regular elements) $s,t$ of an abundant semigroup, $s\mathcal{L}^* t$ if and only if $s\mathcal{L}t$ and similarly, $s\mathcal{R}^* t$ if and only if $s\mathcal{R}t$ \cite{fountain2}.

For each $x$ in an amiable semigroup, we denote by $x_{\ell}$ the unique idempotent in the $\mathcal{L}^*$-class of $x$ and we denote by $x_r$ the unique idempotent in the $\mathcal{R}^*$-class of $x$. (In the literature, these are sometimes denoted by $x^*$ and $x^+$, respectively.) We can view amiable semigroups as algebras of type $\langle 2,1,1\rangle$ where the binary operation is the semigroup multiplication and the unary operations are $x\mapsto x_{\ell}$ and $x\mapsto x_r$. Thus we may think of amiable semigroups as forming a quasivariety axiomatized by, for instance, associativity together with these eight quasi-identities
\begin{alignat*}{3}
x_{\ell} x_{\ell} &= x_{\ell} & \qquad  x_r x_r &= x_r \\
x x_{\ell} &= x & \qquad   x_r x &= x \\
xy = xz &\Rightarrow x_{\ell}y = x_{\ell}z & \qquad   yx = zx &\Rightarrow yx_r = zx_r
\end{alignat*}
\begin{align*}
(\ xx = x\ \land\ yy = y\ \land\ x\mathcal{L} y\ ) &\Rightarrow  x = y \\
(\ xx = x\ \land\ yy = y\ \land\ x\mathcal{R} y\ ) &\Rightarrow  x = y\,.
\end{align*}
Here $x\mathcal{L} y$ abbreviates the conjunction $(\ xy = x\ \land\ yx = y\ )$, and similarly $x\mathcal{R} y$ abbreviates $(\ xy = y\ \land\ yx = x\ )$. We will use these quasi-identities in what follows without explicit reference.

\begin{lemma}
\label{Lem:Reduce}
For all $x,y$ in an amiable semigroup,
\begin{equation}
\label{Eqn:Reduce}
(x_{\ell} y)_{\ell} = (xy)_{\ell}\,.
\end{equation}
\end{lemma}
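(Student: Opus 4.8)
The plan is to show that $xy$ and $x_{\ell}y$ lie in the same $\mathcal{L}^*$-class and then to invoke amiability: since the idempotent in any $\mathcal{L}^*$-class is unique, the equality $(x_{\ell}y)_{\ell}=(xy)_{\ell}$ follows at once from $xy\,\mathcal{L}^*\,x_{\ell}y$. The whole lemma is thus really the statement that right-multiplying by $y$ carries the relation $x\,\mathcal{L}^*\,x_{\ell}$ down to $xy\,\mathcal{L}^*\,x_{\ell}y$.

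To establish $xy\,\mathcal{L}^*\,x_{\ell}y$ I would use the characterization of $\mathcal{L}^*$ and prove that, for all $a,b\in S^1$, one has $(xy)a=(xy)b$ if and only if $(x_{\ell}y)a=(x_{\ell}y)b$. Rewriting these as $x(ya)=x(yb)$ and $x_{\ell}(ya)=x_{\ell}(yb)$, the forward implication is exactly the quasi-identity $xu=xv\Rightarrow x_{\ell}u=x_{\ell}v$ applied with $u=ya$ and $v=yb$; here one only needs to observe that $ya,yb\in S^1$, so the substitution is legitimate. The reverse implication follows from $xx_{\ell}=x$: multiplying $x_{\ell}(ya)=x_{\ell}(yb)$ on the left by $x$ gives $x(ya)=xx_{\ell}(ya)=xx_{\ell}(yb)=x(yb)$. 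This yields $xy\,\mathcal{L}^*\,x_{\ell}y$ using only the stated quasi-identities.

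Finally, I would combine $xy\,\mathcal{L}^*\,(xy)_{\ell}$ and $x_{\ell}y\,\mathcal{L}^*\,(x_{\ell}y)_{\ell}$ with the relation just proved to conclude $(xy)_{\ell}\,\mathcal{L}^*\,(x_{\ell}y)_{\ell}$. Since both are idempotents, the fact recalled at the start of the section, that $\mathcal{L}^*$ coincides with $\mathcal{L}$ on idempotents, gives $(xy)_{\ell}\,\mathcal{L}\,(x_{\ell}y)_{\ell}$, and the amiability quasi-identity forcing $\mathcal{L}$-related idempotents to be equal then delivers $(x_{\ell}y)_{\ell}=(xy)_{\ell}$.

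I do not expect a serious obstacle here. The only points requiring care are the bookkeeping with $S^1$ in the substitution $u=ya$, $v=yb$, and remembering to use the biconditional form of the $\mathcal{L}^*$-characterization rather than just the single implication listed among the axioms, the reverse direction being supplied by the identity $xx_{\ell}=x$.
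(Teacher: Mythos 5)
Your proof is correct and takes essentially the same approach as the paper: the paper's proof simply cites the standard fact that $\mathcal{L}^*$ is a right congruence to pass from $x\,\mathcal{L}^*\,x_{\ell}$ to $xy\,\mathcal{L}^*\,x_{\ell}y$, and then concludes $(x_{\ell}y)_{\ell}=(xy)_{\ell}$ by uniqueness of the idempotent in an $\mathcal{L}^*$-class. Your hands-on verification of $xy\,\mathcal{L}^*\,x_{\ell}y$ via the characterization of $\mathcal{L}^*$ (forward direction from the quasi-identity, reverse direction from $xx_{\ell}=x$) is exactly an unpacking of that cited right-congruence property, so the two arguments coincide in substance.
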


\begin{proof}
The relation $\mathcal{L}^*$ is a right congruence. Since $x\mathcal{L}^*x_{\ell}$, we have $xy\mathcal{L}^*x_{\ell}y$ and so $(x_{\ell} y)_{\ell} = (xy)_{\ell}$.
\end{proof}

\begin{lemma}
\label{Lem:AvoidsM}
Let $S$ be an amiable semigroup and let $a,b\in S$ be noncommuting idempotents. The following are equivalent: (i) $aba = ab$, (ii) $bab = ab$, (iii) $abab = ab$. When these conditions hold, the subsemigroup of $S$ generated by $a$ and $b$ is isomorphic to $M$.
\end{lemma}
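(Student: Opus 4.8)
The plan is to prove the cycle (i) $\Rightarrow$ (iii), (ii) $\Rightarrow$ (iii), (iii) $\Rightarrow$ (i), (iii) $\Rightarrow$ (ii), and then to deduce the isomorphism from the observation that the three conditions are exactly the relations defining $\mathcal{M}$. Write $c = ab$. The two implications into (iii) are immediate from idempotency of $a$ and $b$: if $aba = ab$ then $abab = (aba)b = ab\cdot b = ab$, and if $bab = ab$ then $abab = a(bab) = a\cdot ab = ab$. So all the work is in deriving (i) and (ii) from (iii).

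For this, note first that (iii) says exactly that $c = ab$ is idempotent, and that $a^2 = a$, $b^2 = b$ give $ac = c$ and $cb = c$ directly. The key observation is that $ca = aba$ and $bc = bab$ are then idempotent as well: $(ca)^2 = c(ac)a = c\cdot c\cdot a = ca$ and $(bc)^2 = b(cb)c = b\cdot c\cdot c = bc$, using $ac = c$, $cb = c$ and $c^2 = c$. A one-line check then gives $ca\,\mathcal{R}\,c$, namely $(ca)c = c(ac) = c$ and $c(ca) = (cc)a = ca$, and dually $bc\,\mathcal{L}\,c$. Since $ca$ and $c$ are both idempotent and $\mathcal{R}$-related, the amiability quasi-identity forcing $\mathcal{R}$-related idempotents to coincide gives $ca = c$, i.e. $aba = ab$; symmetrically $bc = c$ gives $bab = ab$. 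I expect this to be the crux of the lemma: recognizing that the sparse relations $c^2 = c$, $ac = c$, $cb = c$ already make $ca$ and $bc$ idempotents Green-related to $c$ is precisely what enables amiability to collapse them.

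Finally, when the three equivalent conditions hold, $a$ and $b$ satisfy exactly the defining relations of $\mathcal{M}$, so there is a surjective homomorphism from $M = \{a, b, c, d\}$ onto the subsemigroup $\langle a, b\rangle$, and it suffices to prove it injective, i.e. that $a$, $b$, $c = ab$, $d = ba$ are pairwise distinct. Noncommutativity gives $a \neq b$ and $c \neq d$ outright, and each remaining coincidence collapses to the forbidden identity $ab = ba$: for example $a = c$ means $ab = a$, whence $ba = b(ab) = bab = ab$ by (ii), a contradiction, and the cases $b = c$, $a = d$, $b = d$ run identically on (i) and (ii). With all four elements distinct the homomorphism is a bijection and hence an isomorphism $\langle a, b\rangle \cong M$. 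The only real bookkeeping is this handful of distinctness cases, none of which is more than a single rewrite.
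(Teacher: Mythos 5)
Your proof is correct; I verified every computation. The heart of it --- (iii) $\Rightarrow$ (i) --- is exactly the paper's argument: from $(ab)^2 = ab$ you show $ca = aba$ is idempotent and $\mathcal{R}$-related to the idempotent $c = ab$, and amiability collapses $\mathcal{R}$-related idempotents. But your overall route is genuinely different, because the paper proves only that implication (plus the trivial (i) $\Rightarrow$ (iii)) in-house: it obtains the equivalence of (i) and (ii) by citing Lemma 2 of \cite{AK}, and the final statement that $\langle a,b\rangle \cong M$ by citing Theorem 3 of \cite{AK}. You make the lemma self-contained in both places. First, your $\mathcal{L}$-dual argument ($bc = bab$ is idempotent and $\mathcal{L}$-related to $c$, so amiability gives $bab = ab$) yields (iii) $\Rightarrow$ (ii) directly, and the cycle through (iii) then subsumes the cited equivalence of (i) and (ii). Second, you replace the cited isomorphism theorem by the universal property of the presentation $\mathcal{M}$ plus a distinctness check on $\{a, b, ab, ba\}$; the three cases you dismissed as ``identical'' do indeed each close in one rewrite (e.g.\ $a = ba$ gives $ab = aba = a(ba) = a = ba$ using (i), and $b = ba$ gives $ab = bab = (ba)b = b = ba$ using (ii)), so the case analysis is complete. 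The trade-off: the paper's proof is shorter on the page but leans on two external results, while yours costs a dual amiability argument and six easy cases and in return makes the lemma independent of \cite{AK}.
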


\begin{proof}
The equivalence of (i) and (ii) is (\cite{AK}, Lemma 2). If (i) holds, then clearly $abab = abb = ab$ and so (iii) holds. Now assume (iii). Then $aba\cdot aba = ababa = aba$, and so $aba$ is an idempotent. We have $aba\cdot ab = ab$ and $ab\cdot aba = aba$, and so $aba\mathcal{R} ab$. Since $S$ is amiable, $aba = ab$, that is (i) holds. The remaining assertion is (\cite{AK}, Theorem 3).
\end{proof}

We can interpret Lemma \ref{Lem:AvoidsM} in terms of quasi-identities.

\begin{lemma}
\label{Lem:Quasi}
The class of all amiable semigroups which avoid $M$ is a subquasivariety of the quasivariety of all amiable semigroups. It is characterized by the defining quasi-identities of amiable semigroups together with any one of the following:
\begin{align}
(\ xx = x\ \&\ yy = y\ \&\ xyx = xy\  ) \Rightarrow xy = yx\,, \label{Eqn:AvoidsM1} \\
(\ xx = x\ \&\ yy = y\ \&\ xyx = yx\  ) \Rightarrow xy = yx\,, \label{Eqn:AvoidsM2} \\
(\ xx = x\ \&\ yy = y\ \&\ xyxy = xy\  ) \Rightarrow xy = yx\,. \label{Eqn:AvoidsM3}
\end{align}
\end{lemma}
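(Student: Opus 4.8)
The plan is to show directly that, for each of the three listed quasi-identities, an amiable semigroup $S$ satisfies it if and only if $S$ avoids $M$. Once this is done, all three quasi-identities carve out the same class, namely the amiable semigroups avoiding $M$; since adjoining a single quasi-identity to the defining quasi-identities of amiable semigroups yields a subquasivariety, this exhibits the class of amiable semigroups avoiding $M$ as a subquasivariety characterized by any one of \eqref{Eqn:AvoidsM1}, \eqref{Eqn:AvoidsM2}, \eqref{Eqn:AvoidsM3}.

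First I would treat \eqref{Eqn:AvoidsM1} in full. For one direction, suppose $S$ contains a subsemigroup $T \cong M$. Under the isomorphism, the images $a,b \in S$ of the two generators are idempotents with $ab \neq ba$, and the defining relations of $\mathcal{M}$ give $aba = ab$; thus $a,b$ exhibit a failure of \eqref{Eqn:AvoidsM1}, so every amiable semigroup satisfying \eqref{Eqn:AvoidsM1} must avoid $M$. For the converse, suppose $S$ fails \eqref{Eqn:AvoidsM1}: there are idempotents $a,b \in S$ with $aba = ab$ but $ab \neq ba$. These are noncommuting idempotents satisfying condition (i) of Lemma \ref{Lem:AvoidsM}, so by that lemma $\langle a,b\rangle \cong M$ and $S$ does not avoid $M$. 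Hence $S$ avoids $M$ if and only if $S$ satisfies \eqref{Eqn:AvoidsM1}.

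The arguments for \eqref{Eqn:AvoidsM3} and \eqref{Eqn:AvoidsM2} are the same once the correct hypothesis of Lemma \ref{Lem:AvoidsM} is matched. For \eqref{Eqn:AvoidsM3}, the hypothesis $xyxy = xy$ becomes $abab = ab$, which is exactly condition (iii). For \eqref{Eqn:AvoidsM2}, whose hypothesis reads $xyx = yx$, I would apply the quasi-identity with the two idempotents interchanged: taking $x = b$, $y = a$ turns the hypothesis into $bab = ab$, which is condition (ii). In each case a copy of $M$ yields idempotents $a,b$ that fail to commute and, by the equivalence of (i), (ii), (iii) in Lemma \ref{Lem:AvoidsM}, satisfy the relevant relation, while conversely a failure of the quasi-identity produces noncommuting idempotents meeting one of those conditions and hence, again by Lemma \ref{Lem:AvoidsM}, a copy of $M$.

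The mathematical content is carried entirely by Lemma \ref{Lem:AvoidsM}, so I do not expect a genuine obstacle; the only care required is the bookkeeping that matches each quasi-identity's hypothesis to the appropriate condition of that lemma, in particular the interchange of variables needed for \eqref{Eqn:AvoidsM2}.
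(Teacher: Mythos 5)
Your proposal is correct and follows essentially the same route as the paper: both directions reduce to Lemma \ref{Lem:AvoidsM} (a copy of $M$ gives noncommuting idempotents satisfying the relevant relation, and a failure of the quasi-identity gives noncommuting idempotents which by that lemma generate a copy of $M$). Your write-up is in fact slightly more careful than the paper's, since you make explicit the variable interchange needed to match \eqref{Eqn:AvoidsM2} with condition (ii), which the paper dismisses with ``the proofs for the other two cases are similar.''
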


\begin{proof}
If a semigroup $S$ contains a copy of $M$, then \eqref{Eqn:AvoidsM1} is not satisfied since $aba = ca = c = ab$. Conversely, if \eqref{Eqn:AvoidsM1} is not satisfied in $S$, then there exist idempotents $a$ and $b$ with $aba = ab$. By Lemma \ref{Lem:AvoidsM}, $a$ and $b$ generate a copy of $M$. The proofs for the other two cases are similar.
\end{proof}

\begin{lemma}
\label{Lem:xlyl=xyl}
Let $S$ be an amiable semigroup which avoids $M$ and let $c\in S$ be an idempotent. Then for all $x\in S$,
\begin{align}
x(xc)_{\ell} &= xc\,,  \label{Eqn:xxc=xc1} \\
x_{\ell}(xc)_{\ell} &= x_{\ell}c\,. \label{Eqn:xxc=xc2}
\end{align}
\end{lemma}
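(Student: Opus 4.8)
The plan is to establish \eqref{Eqn:xxc=xc1} first and then deduce \eqref{Eqn:xxc=xc2} from it essentially for free. Indeed, if \eqref{Eqn:xxc=xc1} holds for every element, then applying it with $x$ replaced by $x_{\ell}$ gives $x_{\ell}(x_{\ell}c)_{\ell} = x_{\ell}c$; since Lemma \ref{Lem:Reduce} tells us that $(x_{\ell}c)_{\ell} = (xc)_{\ell}$, this is exactly \eqref{Eqn:xxc=xc2}. So the whole lemma reduces to proving the single identity $x(xc)_{\ell} = xc$.

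To prove \eqref{Eqn:xxc=xc1}, write $e = (xc)_{\ell}$. By definition $e$ is an idempotent with $xc \mathcal{L}^* e$, so in particular $(xc)e = xc$. First I would extract the relation $ec = e$: since $c$ is idempotent we have $(xc)c = xc = (xc)1$, and feeding this into the characterization of $\mathcal{L}^*$ (with $a = c$, $b = 1$) yields $ec = e1 = e$. The crucial step is then to bring in the avoidance of $M$. From $ec = e$ we get $ece = ee = e = ec$, so the idempotents $e$ and $c$ satisfy $ece = ec$; quasi-identity \eqref{Eqn:AvoidsM1} now forces $ec = ce$, and combined with $ec = e$ this gives $ce = e$. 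Finally, substituting $e = ce$ and using associativity, $xe = x(ce) = (xc)e = xc$, which is \eqref{Eqn:xxc=xc1}.

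The one genuinely non-routine ingredient is recognizing that $M$-avoidance should be applied to the pair of idempotents $e$ and $c$: once one notices that $e = (xc)_{\ell}$ automatically satisfies $ec = e$, the commuting-idempotents conclusion $ce = ec = e$ drops out of \eqref{Eqn:AvoidsM1}, and this is precisely what lets us replace $e$ by $c$ when multiplying on the left by $x$. Everything else is direct manipulation of the defining quasi-identities of amiable semigroups, so I expect no real difficulty beyond identifying this application of the forbidden-subsemigroup hypothesis.
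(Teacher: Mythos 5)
Your proposal is correct and follows essentially the same route as the paper: derive $(xc)_{\ell}c = (xc)_{\ell}$ from the $\mathcal{L}^*$ characterization, apply \eqref{Eqn:AvoidsM1} to the idempotents $c$ and $(xc)_{\ell}$ to get $c(xc)_{\ell} = (xc)_{\ell}c = (xc)_{\ell}$, conclude $x(xc)_{\ell} = xc$, and then obtain \eqref{Eqn:xxc=xc2} by substituting $x_{\ell}$ for $x$ (with Lemma \ref{Lem:Reduce}, which the paper leaves implicit). The only cosmetic difference is that you feed \eqref{Eqn:AvoidsM1} the premise $ece = ec$ while the paper uses $cec = ce$; both are immediate from $ec = e$.
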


\begin{proof}
Since $xc = xcc$, we have $(xc)_{\ell} = (xc)_{\ell} c$, and so $c(xc)_{\ell}c = c(xc)_{\ell}$. By \eqref{Eqn:AvoidsM1}, $c(xc)_{\ell} = (xc)_{\ell}c = (xc)_{\ell}$. Thus $xc = xc(xc)_{\ell} = x(xc)_{\ell}$, which establishes \eqref{Eqn:xxc=xc1}, and then \eqref{Eqn:xxc=xc2} follows from \eqref{Eqn:xxc=xc1}.
\end{proof}

\begin{lemma}
\label{Lem:xc=cx}
Let $S$ be an amiable semigroup which avoids $M$, let $c,x\in S$ and assume $c$ is an idempotent. If $cx = xc$, then $cx_{\ell} = x_{\ell}c$.
\end{lemma}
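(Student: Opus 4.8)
The plan is to show that the two idempotents $c$ and $x_{\ell}$ satisfy the hypothesis of the forbidden-$M$ quasi-identity \eqref{Eqn:AvoidsM1}, and then read off $cx_{\ell} = x_{\ell}c$ as its conclusion. Concretely, substituting $x_{\ell}$ for the variable $x$ and $c$ for the variable $y$ in \eqref{Eqn:AvoidsM1}, it suffices to prove the single identity $x_{\ell}cx_{\ell} = x_{\ell}c$.

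The engine of the argument is a reformulation of the commuting hypothesis. Since $x = xx_{\ell}$, associativity gives $cx = c(xx_{\ell}) = (cx)x_{\ell}$; substituting $cx = xc$ at both ends yields $xc = (xc)x_{\ell}$, that is, $xc\,x_{\ell} = xc$. This is the only place the assumption $cx = xc$ is used, and it converts a statement about $c$ and $x$ into one about right multiplication by $x_{\ell}$.

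Next I would transfer this to the idempotent $e := (xc)_{\ell}$. Because $xc \mathrel{\mathcal{L}^*} e$, the relation $xc\,x_{\ell} = xc = xc\cdot 1$ forces $ex_{\ell} = e$, via the defining characterization of $\mathcal{L}^*$ taken with $a := x_{\ell}$ and $b := 1$. Now Lemma \ref{Lem:xlyl=xyl}, in the form \eqref{Eqn:xxc=xc2}, supplies $x_{\ell}c = x_{\ell}e$. Combining these, $x_{\ell}cx_{\ell} = x_{\ell}ex_{\ell} = x_{\ell}e = x_{\ell}c$, which is exactly the identity needed to invoke \eqref{Eqn:AvoidsM1}.

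I expect the main obstacle to be locating the right intermediate quantity rather than any hard computation: once one decides to prove $x_{\ell}cx_{\ell} = x_{\ell}c$ and to route through $e = (xc)_{\ell}$, every step is a one-line manipulation. The genuinely nontrivial move is the passage from $xc\,x_{\ell} = xc$ to $ex_{\ell} = e$, which is where the definition of $\mathcal{L}^*$ (and not merely the unary operation $x\mapsto x_{\ell}$) does the work; the derivation of $xc\,x_{\ell} = xc$ from $cx = xc$ is the small but essential trick that gets the whole argument started.
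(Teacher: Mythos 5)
Your proof is correct, and at the top level it follows the same strategy as the paper's: extract $xc = xcx_{\ell}$ from the commuting hypothesis, reduce everything to the single identity $x_{\ell}cx_{\ell} = x_{\ell}c$, and invoke \eqref{Eqn:AvoidsM1} with the idempotents $x_{\ell}$ and $c$. Where you diverge is the middle step. The paper reads $xc = xcx_{\ell}$ as $x\cdot c = x\cdot (cx_{\ell})$ and applies the defining quasi-identity $xy = xz \Rightarrow x_{\ell}y = x_{\ell}z$ with base element $x$, which yields $x_{\ell}c = x_{\ell}cx_{\ell}$ in a single stroke; \eqref{Eqn:AvoidsM1} then finishes the two-line proof. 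You instead apply the $\mathcal{L}^*$-transfer with base element $xc$, obtaining $ex_{\ell} = e$ for $e = (xc)_{\ell}$ (legitimate, since the paper's characterization of $\mathcal{L}^*$ allows $a,b \in S^1$, so taking $b = 1$ is fine), and you then need Lemma \ref{Lem:xlyl=xyl} in the form \eqref{Eqn:xxc=xc2} to convert $x_{\ell}e$ back into $x_{\ell}c$. Both routes are sound and there is no circularity, since Lemma \ref{Lem:xlyl=xyl} precedes this lemma; but your detour uses the avoidance of $M$ twice (once inside the proof of Lemma \ref{Lem:xlyl=xyl}, once at the end), whereas the paper uses it only once. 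In particular, the step you single out as the ``genuinely nontrivial move'' --- passing from $xcx_{\ell} = xc$ to $ex_{\ell} = e$ --- is precisely the step that the paper's choice of base element makes unnecessary: applying the $\mathcal{L}^*$-property to $x$ rather than to $xc$ lands directly on the identity $x_{\ell}cx_{\ell} = x_{\ell}c$ that \eqref{Eqn:AvoidsM1} wants.
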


\begin{proof}
Since $xc = cx = cxx_{\ell} = xcx_{\ell}$, we have $x_{\ell}c = x_{\ell}cx_{\ell}$. By \eqref{Eqn:AvoidsM1}, $cx_{\ell} = x_{\ell}c$.
\end{proof}

\begin{lemma}
\label{Lem:abe=ab}
Let $S$ be an amiable semigroup which avoids $M$, let $a,b\in S$ be idempotents and suppose there exists a positive integers $m,n$, with $m>n$ such that $(ab)^{m} = (ab)^n$. Then $(ab)^{n+1} = (ab)^n$.
\end{lemma}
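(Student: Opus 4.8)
The plan is to set $c = ab$ and exploit the fact that the hypothesis forces the monogenic subsemigroup $\langle c\rangle$ to be finite. First I would observe that, since $(ab)^m = (ab)^n$ with $m > n$, the cyclic semigroup generated by $c$ is finite and hence contains an idempotent $e = c^k = (ab)^k$ for some $k \geq 1$. Because $a$ and $b$ are idempotents we have $a(ab) = ab$ and $(ab)b = ab$, which immediately give the two absorption identities $ae = e$ and $eb = e$. At this point the whole lemma reduces to showing that $c$ has period $1$, i.e. that $c^{k+1} = c^k$: once that is known, $\langle c\rangle$ has period $1$, and the relation $c^m = c^n$ with $m \neq n$ forces $n$ to be at least the index of $\langle c\rangle$, whence $c^{n+1} = c^n$.

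The crux is therefore to prove $ce = e$, and the key step is the auxiliary claim $be = e$. I would obtain this by producing an idempotent that is $\mathcal{L}$-related to $e$ and then invoking amiability. Using $eb = e$ one checks that $be$ is idempotent, since $(be)(be) = b(eb)e = bee = be$. Moreover $e(be) = (eb)e = ee = e$ and $(be)e = b(ee) = be$, so that $e\,\mathcal{L}\,be$ in the sense of the abbreviation introduced above. Since $\mathcal{L}$ and $\mathcal{L}^*$ coincide on idempotents of an abundant semigroup, $e$ and $be$ lie in the same $\mathcal{L}^*$-class; as both are idempotent and $S$ is amiable, they must coincide, giving $be = e$. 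Then $ce = (ab)e = a(be) = ae = e$, that is, $c^{k+1} = c^k$, as required.

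Finishing is then routine bookkeeping with finite monogenic semigroups: from $c^{k+1} = c^k$ we get $c^{k+i} = c^k$ for all $i \geq 0$, so $c^k$ is idempotent and $\langle c\rangle$ has period $1$ and some index $r \leq k$. In a monogenic semigroup of period $1$ an equality $c^i = c^j$ with $i \neq j$ forces $i, j \geq r$; applying this to $c^m = c^n$ gives $n \geq r$, and period $1$ then yields $c^{n+1} = c^n$.

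I expect the only real obstacle to be isolating the correct idempotent relation, namely recognizing that it is $be$ (rather than $eb = e$, which is immediate) that one should compare with $e$, and getting the handedness of the Green relation right so that amiability applies; everything else is either a one-line computation or the standard index--period analysis of a finite cyclic semigroup. It is worth remarking that this approach uses only amiability and the existence of an idempotent power of $ab$: the hypothesis that $S$ avoids $M$, though part of the standing assumptions of this section, does not appear to be needed for this particular step.
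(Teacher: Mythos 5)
Your proof is correct, but it takes a genuinely different route at the one step that matters. Both arguments share the same skeleton: pass to the finite monogenic subsemigroup generated by $c=ab$, pick an idempotent power $e=(ab)^k$, prove $be=e$, deduce $ce = a(be) = ae = e$ (i.e.\ $c^{k+1}=c^k$), and finish with the standard index--period bookkeeping. The divergence is in how $be=e$ is obtained. The paper notes that $beb=be$ (since $eb=e$) and applies the $M$-avoidance quasi-identity \eqref{Eqn:AvoidsM1} to the idempotents $b$ and $e$, getting $be=eb=e$ in one line from the toolkit of Lemma \ref{Lem:Quasi}. You instead verify directly that $be$ is an idempotent $\mathcal{L}$-related to $e$, and invoke amiability --- the uniqueness of the idempotent in each $\mathcal{L}^*$-class, which is one of the defining quasi-identities --- to force $be=e$. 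Your computation is slightly longer, but it buys a real strengthening: the lemma holds in \emph{every} amiable semigroup, with no avoidance hypothesis, so your closing remark is correct. (As a sanity check, the statement without the avoidance hypothesis is not contradicted by $M$ itself: there $(ab)^2=ab$ already holds, so the conclusion is vacuously fine.) This localizes the genuine use of $M$-avoidance in the paper's argument to the surrounding lemmas, notably Lemma \ref{Lem:abreduce}, where it is indispensable.
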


\begin{proof}
Consider the monogenic subsemigroup of $S$ generated by $ab$. Since $(ab)^m=(ab)^n$ it is finite. Hence it has an idempotent element $(ab)^k$, for some $k\in \mathbb{N}$, with $k\leq m$.

Now $b(ab)^k b=b(ab)^k$ which implies by \eqref{Eqn:AvoidsM1} that $b(ab)^k=(ab)^k b=(ab)^k$. Hence $(ab)^{k+1}=a(ab)^k =(ab)^k$.

Since $(ab)^k =(ab)^{k+j}$, for all $j\in \mathbb{N}$, $k\leq m$ and $(ab)^m=(ab)^n$, the result follows.
\end{proof}

\begin{lemma}
\label{Lem:abreduce}
Let $S$ be an amiable semigroup which avoids $M$, let $a,b\in S$ be idempotents and suppose $(ab)^{n+1} = (ab)^n$ for some integer $n > 0$. Then $ab = ba$.
\end{lemma}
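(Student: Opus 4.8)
The plan is to reduce everything to showing that $ab$ is idempotent. Indeed, if $(ab)^2 = ab$ then $abab = ab$, and \eqref{Eqn:AvoidsM3} (with $x=a$, $y=b$) gives $ab = ba$; conversely $ab = ba$ forces $(ab)^2 = abab = a(ba)b = a(ab)b = ab$. So it suffices to prove $(ab)^2 = ab$, and I would do this by induction on $n$. Set $c = (ab)^n$. From $(ab)^{n+1} = (ab)^n$ one gets $(ab)^{n+k} = (ab)^n$ for all $k \geq 0$, and taking $k = n$ gives $c^2 = (ab)^{2n} = (ab)^n = c$, so $c$ is idempotent. Since $a^2=a$ and $b^2=b$ we have $ac = c$ and $cb = c$ immediately; applying \eqref{Eqn:AvoidsM2} to the idempotents $a,c$ (using $aca = (ac)a = ca$) gives $ca = ac = c$, and applying \eqref{Eqn:AvoidsM1} to $b,c$ (using $bcb = b(cb) = bc$) gives $bc = cb = c$. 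Thus $c$ is absorbed on both sides by $a$ and by $b$, exactly as in the proof of Lemma \ref{Lem:abe=ab}. The base case $n=1$ is precisely \eqref{Eqn:AvoidsM3}, so from now on assume $n \geq 2$.

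For the inductive step I would introduce the two idempotents attached to $ab$ by amiability, $e = (ab)_r$ and $f = (ab)_\ell$. From $a(ab) = ab = e(ab)$ and the quasi-identity $yx = zx \Rightarrow yx_r = zx_r$ one gets $ae = e$, and then \eqref{Eqn:AvoidsM2} applied to $a,e$ yields $ea = ae = e$; symmetrically $bf = fb = f$. Short computations give $ec = c$ and $cf = c$, whence \eqref{Eqn:AvoidsM1}/\eqref{Eqn:AvoidsM2} give $ce = ec = c$ and $fc = cf = c$; so $c$ lies below both $e$ and $f$ in the natural order on idempotents, which lie below $a$ and $b$ respectively. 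Moreover \eqref{Eqn:xxc=xc1} with $x=a$ and idempotent $b$ gives $af = a(ab)_\ell = ab$, while $eb = (ea)b = e(ab) = ab$; multiplying $af = eb$ on the left by $e$ (and using $ea=e$) yields $ef = ab$.

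Writing $p = (ab)^{n-1}$, the identities $e(ab) = ab = (ab)f$ give $ep = p$ and $pf = p$, while the two cancellation quasi-identities applied to $(ab)^n(ab) = (ab)^{n-1}(ab)$ and to $(ab)(ab)^n = (ab)(ab)^{n-1}$ give $pe = ce = c$ and $fp = fc = c$; one also has $p^2 = (ab)^{2n-2} = c$ and $cp = pc = c$. The induction closes as soon as one proves $p = c$: this says $(ab)^{n-1} = (ab)^n$, an instance of the hypothesis at index $n-1$, so the inductive hypothesis then delivers $ab = ba$. Note $p = c$ is equivalent to $(ab)_r = c$, i.e. to $ab\,\mathcal{R}^* c$, since from $e = c$ one gets $ab = e(ab) = c(ab) = (ab)^{n+1} = c$.

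The main obstacle is exactly this reduction $p = c$. The relations assembled above, namely $p^2 = c$, $ep = pf = p$, $pe = fp = c$ and $cp = pc = c$, are too symmetric for the $M$-avoidance quasi-identities alone to force $p = c$: one must genuinely exploit that $ab$ has a finite cyclic closure together with the uniqueness built into amiability. The route I expect to succeed is to show that $e$ and $f$ commute (equivalent to $p = c$ and to $ab\,\mathcal{H}^* c$, after which $(ab)^2 = ef\cdot ef = e(fe)f = e(ef)f = ef = ab$): starting from $ef = ab$ and the fact that $fe$ sandwiches $p$ to $c$ on both sides, one passes to the local monoid $eSe$, where $g = efe$ satisfies $eg = ge = g$ and $g^{n} = g^{n+1} = c$, and then uses Lemma \ref{Lem:xc=cx} to propagate the commutation $cx = xc$ up to the idempotents $x_\ell$. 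Controlling this cancellation, equivalently establishing $(ab)_r = ((ab)^2)_r$ so that $ab\,\mathcal{R}^* (ab)^n = c$, is the crux and the only step that is not routine manipulation.
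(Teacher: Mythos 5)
Your setup is fine and every auxiliary fact you assemble is correct: $(ab)^n$ is an idempotent absorbed by $a$ and $b$ on both sides, and the relations you derive for $e=(ab)_r$, $f=(ab)_{\ell}$, $p=(ab)^{n-1}$ (namely $ea=ae=e$, $fb=bf=f$, $ef=ab$, $ep=pf=p$, $pe=fp=c$, $p^2=c$) all follow from the quasi-identities and Lemma \ref{Lem:xlyl=xyl}. But the proposal is not a proof: the descent $p=c$, i.e. $(ab)^n=(ab)^{n-1}$, which you correctly identify as the step that closes the induction, is never established --- you explicitly defer it as ``the crux.'' Worse, the route you suggest for it is circular. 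Since $ef=ab$ as elements, the pair of idempotents $e,f$ satisfies $(ef)^{n+1}=(ef)^n$ with the \emph{same} exponent $n$, so ``show that $e$ and $f$ commute'' is precisely the statement of the lemma applied to $e,f$ in place of $a,b$, with no decrease in any parameter. Unless one exploits some property of $e,f$ beyond idempotency, this reduction makes no progress, and your sketch (the local monoid $eSe$, the element $g=efe$, propagating commutation via Lemma \ref{Lem:xc=cx}) never isolates such a property or carries out a computation that uses it.

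The paper closes exactly this gap by a direct computation whose key ingredient you never touch: the $\ell$-idempotents attached to the \emph{products} $(ab)^{n-1}a$ and $(ab)^{n-1}$, rather than the idempotents $(ab)_r$, $(ab)_{\ell}$ attached to $ab$ itself. From $(ab)^{n-1}a\cdot b=(ab)^{n-1}a\cdot b(ab)^n$ and left cancellation one gets $[(ab)^{n-1}a]_{\ell}\,b=[(ab)^{n-1}a]_{\ell}\,(ab)^n$; since $(ab)^n$ commutes with $(ab)^{n-1}a$, Lemma \ref{Lem:xc=cx} lets it commute past $[(ab)^{n-1}a]_{\ell}$, and \eqref{Eqn:xxc=xc1} collapses $(ab)^{n-1}[(ab)^{n-1}a]_{\ell}$ to $(ab)^{n-1}a$, giving $[(ab)^{n-1}a]_{\ell}\,b=(ab)^n$; then \eqref{Eqn:AvoidsM2} yields the commutation \eqref{Eqn:bcomm}, from which $(ab)^n=(ab)^{n-1}a$. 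A second pass of the same argument with $[(ab)^{n-1}]_{\ell}$ gives \eqref{Eqn:Woohoo}, $(ab)^n=a\,[(ab)^{n-1}]_{\ell}$, and multiplying on the left by $(ab)^{n-2}$ together with \eqref{Eqn:xxc=xc1} (or invoking \eqref{Eqn:xxc=xc1} directly when $n=2$) produces $(ab)^n=(ab)^{n-1}$ --- your missing $p=c$. With that step supplied, your induction closes exactly as you intended, terminating at $(ab)^2=ab$ and \eqref{Eqn:AvoidsM3}.
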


\begin{proof}
If $n = 1$, then the desired result follows from \eqref{Eqn:AvoidsM3}. Thus we may assume $n > 1$.
Clearly $(ab)^n$ is an idempotent. Now $a(ab)^n a = (ab)^n a$, and so by \eqref{Eqn:AvoidsM2}, $(ab)^n a = a(ab)^n = (ab)^n$. Similarly, $b(ab)^n b = b(ab)^n$, and so by \eqref{Eqn:AvoidsM1}, $b(ab)^n = (ab)^n b = (ab)^n$. It follows that $(ab)^n$ is in center (and in fact, is an absorbing element) of the subsemigroup of $S$ generated by $a$ and $b$. Since $(ab)^{n-1}a\cdot b = (ab)^{n-1}a\cdot b(ab)^n$, we have
\[
[(ab)^{n-1}a]_{\ell}\cdot b = [(ab)^{n-1}a]_{\ell}\cdot b(ab)^n = [(ab)^{n-1}a]_{\ell}\cdot (ab)^n\,.
\]
Since $(ab)^n$ commutes with $(ab)^{n-1}a$, it also commutes with $[(ab)^{n-1}a]_{\ell}$ by Lemma \ref{Lem:xc=cx}. Thus
\begin{align*}
[(ab)^{n-1}a]_{\ell}\cdot b &= (ab)^n \cdot [(ab)^{n-1}a]_{\ell} = ab\cdot (ab)^{n-1} [(ab)^{n-1}a]_{\ell} \\ &\byeqn{Eqn:xxc=xc1} ab\cdot (ab)^{n-1} a = (ab)^n a = (ab)^n\,.
\end{align*}
Hence $b\cdot [(ab)^{n-1}a]_{\ell}\cdot b = b(ab)^n = (ab)^n = [(ab)^{n-1}a]_{\ell}\cdot b$. Applying \eqref{Eqn:AvoidsM2}, we have
\begin{equation}
\label{Eqn:bcomm}
[(ab)^{n-1}a]_{\ell}\cdot b = b\cdot [(ab)^{n-1}a]_{\ell}\,.
\end{equation}

Next, we compute
\begin{align*}
(ab)^n &= (ab)^{n-1}ab  \\
&\byeqn{Eqn:xxc=xc1} (ab)^{n-1} [(ab)^{n-1}a]_{\ell} \cdot b \\
&\byeqn{Eqn:bcomm} (ab)^{n-1} b\cdot [(ab)^{n-1}a]_{\ell} \\
&= (ab)^{n-1} [(ab)^{n-1}a]_{\ell} \\
&\byeqn{Eqn:xxc=xc1} (ab)^{n-1}a\,,
\end{align*}
which establishes
\[
(ab)^n = (ab)^{n-1} a\,.
\]
Now $(ab)^{n-1} (ab)^n = (ab)^n = (ab)^{n-1} a$, and so $[(ab)^{n-1}]_{\ell} (ab)^n = [(ab)^{n-1}]_{\ell}\cdot a$. Since $(ab)^n$ commutes with $(ab)^{n-1}$, it also commutes with $[(ab)^{n-1}]_{\ell}$ by Lemma \ref{Lem:xc=cx}. Thus \[
[(ab)^{n-1}]_{\ell}\cdot a = (ab)^n [(ab)^{n-1}]_{\ell} = ab\cdot (ab)^{n-1} [(ab)^{n-1}]_{\ell} = ab\cdot (ab)^{n-1} = (ab)^n\,.
\]
Now $a\cdot [(ab)^{n-1}]_{\ell}\cdot a = a(ab)^n = (ab)^n = [(ab)^{n-1}]_{\ell}\cdot a$. By \eqref{Eqn:AvoidsM2}, $a\cdot [(ab)^{n-1}]_{\ell} = [(ab)^{n-1}]_{\ell}\cdot a$. Therefore
\begin{equation}
\label{Eqn:Woohoo}
(ab)^n = a\cdot [(ab)^{n-1}]_{\ell}\,.
\end{equation}

If $n = 2$, then \eqref{Eqn:Woohoo} is $(ab)^2 = a\cdot [ab]_{\ell} = ab$ by \eqref{Eqn:xxc=xc1}. If $n > 2$, then we multiply both sides of \eqref{Eqn:Woohoo} by $(ab)^{n-2}$ on the left. Since $(ab)^{n-2}(ab)^n = (ab)^n$, we get
\[
(ab)^n = (ab)^{n-2}a\cdot [(ab)^{n-2}a\cdot b]_{\ell} = (ab)^{n-2}a\cdot b = (ab)^{n-1}\,,
\]
using \eqref{Eqn:xxc=xc1}. Therefore we have shown that the assumption that $(ab)^{n+1} = (ab)^n$ implies $(ab)^n = (ab)^{n-1}$. Continuing, we eventually reduce this to $(ab)^2 = ab$. By \eqref{Eqn:AvoidsM3}, $ab = ba$.
\end{proof}

\begin{corollary}
\label{Cor:abe}
Let $S$ be an amiable semigroup which avoids $M$, let $a,b\in S$ be idempotents and suppose there exist positive integers $m,n$, with $m>n$ such that $(ab)^{m} = (ab)^n$. Then $ab = ba$.
\end{corollary}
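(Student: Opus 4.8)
The corollary is an immediate consequence of the two preceding lemmas, which were evidently set up precisely to make it so. The plan is simply to chain them together: Lemma~\ref{Lem:abe=ab} reduces the general collapse relation $(ab)^m = (ab)^n$ (with $m > n$) to the specific relation $(ab)^{n+1} = (ab)^n$, and then Lemma~\ref{Lem:abreduce} converts that specific relation into the desired commutativity $ab = ba$.

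Concretely, I would first observe that the hypotheses match those of Lemma~\ref{Lem:abe=ab} verbatim: $S$ is amiable and avoids $M$, the elements $a,b$ are idempotents, and there exist positive integers $m > n$ with $(ab)^m = (ab)^n$. Applying that lemma yields
\[
(ab)^{n+1} = (ab)^n\,.
\]
Next I would feed this into Lemma~\ref{Lem:abreduce}. The only hypothesis to check there is that $n > 0$, which holds since $n$ was taken to be a positive integer. The lemma then gives $ab = ba$ directly, completing the proof.

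There is no genuine obstacle here; the work has all been done in Lemmas~\ref{Lem:abe=ab} and~\ref{Lem:abreduce}, whose combined effect is to pass from \emph{any} idempotent $(ab)^k$ in the monogenic subsemigroup generated by $ab$ down to the base case $(ab)^2 = ab$, at which point \eqref{Eqn:AvoidsM3} forces commutativity. The only thing to be careful about is that the two lemmas share the same index $n$, so that the output $(ab)^{n+1} = (ab)^n$ of the first feeds cleanly into the second; since both statements are phrased with the same $n$, this causes no difficulty.
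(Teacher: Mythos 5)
Your proposal is correct and is exactly the paper's proof: the corollary is obtained by applying Lemma~\ref{Lem:abe=ab} to reduce $(ab)^m = (ab)^n$ to $(ab)^{n+1} = (ab)^n$, and then Lemma~\ref{Lem:abreduce} to conclude $ab = ba$. Your verification of the hypotheses (in particular that $n > 0$) is a fine elaboration of what the paper leaves implicit.
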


\begin{proof}
This follows from Lemmas \ref{Lem:abe=ab} and \ref{Lem:abreduce}.
\end{proof}

Now let $S$ denote an amiable semigroup which is not adequate and which avoids $M$. We fix noncommuting idempotents $a,b\in S$ and let $H$ denote the subsemigroup  generated by $a$ and $b$. The elements of $H$ are
\begin{equation}
\label{Eqn:H}
H = \{ (ab)^m, (ba)^m, (ab)^n a, (ba)^n b \mid m\geq 1, n \geq 0 \}\,.
\end{equation}
(Note that since $S$ is not necessarily a monoid, we interpret $(ab)^0 a$ to be equal to $a$ and similarly $(ba)^0 b = b$.) Our goal is to show that $H$ is an isomorphic copy of $F$. Comparing the elements of $H$ with those of $F$, we see that it is sufficient to show the elements listed in \eqref{Eqn:H} are all distinct.

\begin{lemma}
\label{Lem:Distinct}
The elements of $H$ listed in \eqref{Eqn:H} are all distinct.
\end{lemma}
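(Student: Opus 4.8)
The plan is to show that the natural surjection $F\to H$ is injective by ruling out every coincidence between two of the listed normal forms, using only Corollary~\ref{Cor:abe} (which says that an equality of \emph{unequal} powers of $ab$ forces $ab=ba$) together with the idempotency of $a$ and $b$. The idea is that multiplying any listed element by a single $a$ on the left and a single $b$ on the right collapses it to a pure power of $ab$, and dually to a pure power of $ba$; these two ``sandwich'' reductions turn an equation in $H$ into equations of pure powers, where the Corollary applies.

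First I would record the two sandwich computations. Using $a(ab)^m=(ab)^m$, $(ab)^m b=(ab)^m$, $a(ba)^m=(ab)^m a$, $b(ab)^m=(ba)^m b$ and idempotency, a direct computation gives, for each of the four types, that $a\,u\,b$ is a power $(ab)^p$ and $b\,u\,a$ is a power $(ba)^{p'}$, with the exponent pair $(p,p')$ equal to $(m,m+1)$ for $u=(ab)^m$, to $(m+1,m)$ for $u=(ba)^m$, and to $(n+1,n+1)$ for \emph{both} $u=(ab)^n a$ and $u=(ba)^n b$. Every exponent appearing here is at least $1$, so the Corollary is always applicable.

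Now suppose $u=v$ for two listed elements. Applying the two sandwiches gives $(ab)^{p}=(ab)^{q}$ and $(ba)^{p'}=(ba)^{q'}$, and Corollary~\ref{Cor:abe} forces $p=q$ and $p'=q'$, for otherwise $ab=ba$. The assignment $(\text{type},\text{parameter})\mapsto(p,p')$ is injective with a single exception: $(ab)^n a$ and $(ba)^n b$ share the pair $(n+1,n+1)$. Indeed the sign of $p-p'$ separates the families (negative for $(ab)^m$, positive for $(ba)^m$, zero for the two ``tail'' families), and within each family the parameter is recovered from the exponents. Hence every coincidence either identifies $u$ and $v$ as the same normal form or has the shape $(ab)^n a=(ba)^n b$ for some $n\ge 0$.

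The main obstacle is precisely this residual family, which by design the symmetric double sandwich cannot separate. I would break the symmetry by an \emph{asymmetric} multiplication: when $n=0$ the equality reads $a=b$, contradicting the fact that noncommuting idempotents are distinct; when $n\ge 1$, left-multiplying $(ab)^n a=(ba)^n b$ by $a$ yields $(ab)^n a=(ab)^{n+1}$, and then applying the sandwich $b(\,\cdot\,)a$ to this last equation gives $(ba)^{n+1}=(ba)^{n+2}$, whence $ab=ba$ by Corollary~\ref{Cor:abe}, a contradiction. This completes the argument. I would also note that avoiding the unary operations $(\,\cdot\,)_\ell,(\,\cdot\,)_r$ of $S$ is deliberate: since $S$ may contain idempotents strictly below $b$, one cannot expect $((ab)^m)_\ell=b$ in $S$, so distinctness must be proved by these internal reductions rather than by reading off $\mathcal{L}^*$- and $\mathcal{R}^*$-invariants.
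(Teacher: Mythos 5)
Your proof is correct, and at its core it is the same argument as the paper's: every coincidence between two listed elements is converted, by multiplying by $a$ and $b$ and using idempotency, into an equality of two \emph{distinct} pure powers of $ab$ (or of $ba$), at which point Corollary~\ref{Cor:abe} forces the contradiction $ab=ba$. The difference is organizational rather than substantive. The paper runs a sequential six-case analysis of the coincidences in \eqref{Eqn:H}, with later cases reducing to earlier ones and the $a\leftrightarrow b$ symmetry covering the rest; you batch the very same computations into a single invariant $u\mapsto (aub,\,bua)$, whose exponent pairs $(m,m+1)$, $(m+1,m)$, $(n+1,n+1)$, $(n+1,n+1)$ you compute correctly, so that injectivity of this assignment kills all collisions at once except $(ab)^n a=(ba)^n b$. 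That residual case is the unavoidable blind spot of any $a\leftrightarrow b$-symmetric invariant, and your asymmetric patch --- left-multiply by $a$ to get $(ab)^n a=(ab)^{n+1}$, then sandwich with $b(\cdot)a$ to get $(ba)^{n+1}=(ba)^{n+2}$ --- is sound (the paper avoids needing such a patch because its Case 5 right-multiplies by $b$ and reduces to its Case 4, which treats cross-tail coincidences for arbitrary pairs of exponents). What your packaging buys is transparency: one sees exactly why the four families separate and precisely where symmetry fails; what the paper's chain buys is brevity, each case being dispatched in a line by citing an earlier one.
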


\begin{proof}
We show that each possible case of two elements of $H$ coinciding will lead to a contradiction. Because $a$ and $b$ can be interchanged, half of the cases follow from the rest by symmetry. We sometimes use this observation implicitly in the arguments that follow when we refer to already proven cases.

\emph{Case 1}: If $(ab)^m = (ab)^n$ for some $m > n > 0$, then by Corollary \ref{Cor:abe}, $ab = ba$, a contradiction.

\emph{Case 2}: If $(ab)^m = (ab)^n a$ for some $m>0$, $n\geq 0$, then $(ba)^{m+1} = b(ab)^m a = b(ba)^na\cdot a = (ba)^{n+1}$ which, by Case 1, leads to a contradiction if $m\neq n$. Also $(ab)^{n+1} = (ab)^na\cdot b = (ab)^m b = (ab)^m$ which yields a contradiction by Case 1 if $m\neq n + 1$.

\emph{Case 3}: If $(ab)^m = (ba)^n$ for some $m, n > 0$, then $(ab)^m = a(ab)^m = a(ba)^n = (ab)^n a$, which contradicts Case 2.

\emph{Case 4}: If $(ab)^m = (ba)^n b$ for some $m > 0$, $n \geq 0$, then $(ab)^m = a(ab)^m = a(ba)^n b = (ab)^{n+1}$, which contradicts Case 1 if $m\neq n + 1$. Also $(ba)^{m+1} = b(ab)^m a = b(ba)^n ba = (ba)^{n+1}$, which contradicts Case 1 if $m \neq n$.

\emph{Case 5}: If $(ab)^m a = (ba)^n b$ for some $m,n\geq 0$, then $(ab)^{m+1} = (ab)^ma\cdot b = (ba)^nb\cdot b = (ba)^n b$, which contradicts Case 4.

\emph{Case 6}: If $(ab)^m a = (ab)^n a$ for some $m > n\geq 0$, then $(ab)^{m+1} = (ab)^{n+1}$ which contradicts Case 1.

By the symmetry in $a$ and $b$, this exhausts all possible cases of elements of $H$ coinciding. The proof is complete.
\end{proof}

By Lemma \ref{Lem:Distinct}, the semigroup $F$ defined by the presentation $\langle a, b \mid a^2 = a, b^2 = b \rangle$ is the subsemigroup $H$ generated by $a$ and $b$. This completes the proof of the Main Theorem.

\section{Open Problems}

A semigroup is \emph{left} abundant if each $\mathcal{R}^*$-class contains an idempotent, \emph{left} amiable if each $\mathcal{R}^*$-class contains exactly one idempotent and \emph{left} adequate if it is left abundant and the idempotents commute. There are more left amiable semigroups which are not left adequate than just $F$ and $M$. For instance, every right regular band (that is, every idempotent semigroup in which $\mathcal{L}$ is the equality relation) which is not a semilattice is left amiable but not left adequate.

\begin{problem}
Extend the Main Theorem to characterize left amiable semigroups which are not left adequate.
\end{problem}

In \cite{AK} we suggested the problem of characterizing the free objects in the quasivariety of amiable semigroups. Perhaps the following would be more tractable.

\begin{problem}
Determine the free objects in the quasivariety of amiable semigroups which avoid $M$.
\end{problem}

By our Main Theorem, if there is a nonadequate free object in this quasivariety, then it would contain a copy of $F$.

\begin{problem}
Determine if the class of amiable semigroups which avoid $F$ forms a quasivariety, and if so, find explicit characterizing identities.
\end{problem}

The ultimate goal regarding amiable semigroups is the following.

\begin{problem}
Determine to what extent the structure theory of adequate semigroups can be extended to amiable semigroups.
\end{problem}

Finally, the ``tilde'' Green's relation $\widetilde{\mathcal{L}}$ on a semigroup $S$ is defined by $a \widetilde{\mathcal{L}} b$ if and only if, for each idempotent $e\in S$, $ae = a$ if and only if $be = b$. The relation $\widetilde{\mathcal{R}}$ is defined dually. We have $\mathcal{L}\subseteq \mathcal{L}^*\subseteq \widetilde{\mathcal{L}}$ and similarly for the dual relations. A semigroup is \emph{semiabundant} if there is an idempotent in each $\widetilde{\mathcal{L}}$-class and in each $\widetilde{\mathcal{R}}$-class, and a semiabundant semigroup is \emph{semiadequate} if its idempotents commute \cite{FGG}. A semigroup is \emph{semiamiable} if there is a unique idempotent in each $\widetilde{\mathcal{L}}$-class and in each $\widetilde{\mathcal{R}}$-class. Every semiadequate semigroup is semiamiable. There are one-sided versions of all of these notions as well. It is natural to suggest the following.

\begin{problem}
Extend the Main Theorem to characterize (left) semiadequate semigroups among (left) semiamiable semigroups.
\end{problem}

\begin{acknowledgment}
We are pleased to acknowledge the assistance of the automated deduction tool \textsc{Prover9} developed by McCune \cite{McCune}.

The first and third author were partially supported by FCT and FEDER, Project POCTI-ISFL-1-143 of Centro de Algebra da Universidade de Lisboa, and by FCT and PIDDAC through the project PTDC/MAT/69514/2006.
\end{acknowledgment}

\end{document}